\newtheorem{thm}{Theorem}[section]
\newtheorem{prop}[thm]{Proposition}
\newtheorem{cor}[thm]{Corollary}
\newtheorem{lemma}[thm]{Lemma}
\newtheorem{example}[thm]{Example}
\DeclareMathOperator{\Des}{Des}
\DeclareMathOperator{\col}{col}
\DeclareMathOperator{\cone}{cn}
\DeclareMathOperator{\des}{des}
\DeclareMathOperator{\maj}{maj}
\DeclareMathOperator{\mmod}{mod}
\DeclareMathOperator{\supp}{supp}
\newcommand{\symm}{\mathfrak{S}}
\newcommand{\eps}{\epsilon}
\newcommand{\CC}{\mathbb{C}}
\newcommand{\RR}{\mathbb{R}}
\newcommand{\oeps}{\overline{\epsilon}}
\newcommand{\ZZ}{\mathbb{Z}}
\begin{document}

\title{A Polyhedral Proof of a Wreath Product Identity}
\author{Robert Davis and Bruce Sagan}

\maketitle

\begin{abstract}
	In 2013, Beck and Braun proved and generalized multiple identities involving permutation statistics via discrete geometry.
	Namely, they recognized the identities as specializations of integer point transform identities for certain polyhedral cones.
	They extended many of their proof techniques to obtain identities involving wreath products, but some identities were resistant to their proof attempts.
	In this article, we provide a geometric justification of one of these wreath product identities, which was first established by Biagioli and Zeng.
\end{abstract}
	
	
\section{Introduction}

Let $r,n \in \ZZ_{>0}$ and let $\ZZ_r$ and $\symm_n$ denote the cyclic group of order $r$ and symmetric group on $[n]=\{1,\dots,n\}$, respectively.
The \emph{wreath product of $\ZZ_r$ by $\symm_n$} is defined as
\[
	\ZZ_r \wr \symm_n = \{(\eps,\pi) \mid \eps \in [0,r-1]^n,\  \pi \in \symm_n\}
\]
where $[0,r-1]=\{0,1,\dots,r-1\}$.
It is often convenient to consider elements of $\ZZ_r \wr \symm_n$ as \emph{$r$-colored permutations}, and write them in the \emph{window notation}
\[
	(\eps,\pi) = [\pi(1)^{\eps_1}\, \dots \, \pi(n)^{\eps_n}].
\]
When $r=2$, the wreath product can be viewed as bijections $f$ on $\{\pm 1,\dots, \pm n\}$ such that $f(-i) = -f(i)$, called \emph{signed permutations}.
This set is also referred to as the \emph{Coxeter group of type $B_n$}, and as the \emph{hyperoctahedral group} because it is the symmetry group for the $n$-dimensional octahedron, that is, 
the $n$-dimensional cross-polytope. 
Since the $n$-dimensional cross-polytope is dual to the cube $[-1,1]^n$, the two have the same symmetry group.

In order to discuss certain statistics on wreath products, we define a partial order on the set $\{0^0\} \cup \{i^j \mid i \in [n],\, j \in [0,r-1]\}$.
First, we will use the convention that $\pi(0) = 0$ and $\eps_0 = 0$ for all $(\eps,\pi) \in \ZZ_r \wr \symm_n$.
We will insist that $j^{c_j} < k^{c_k}$ if one of the three following conditions hold:
\begin{enumerate}
	\item $j<k$ and $c_j = c_k = 0$; 
	\item $j > k$ and $c_j,c_k >0$;
	\item $j^{c_j} < 0^0$ for any $c_j > 0$.
\end{enumerate}
We will refer to this as the \emph{Biagioli-Zeng partial ordering}, or simply the \emph{BZ ordering}, on $\ZZ_r \wr \symm_n$.
In the case of $r=n=3$, the BZ ordering is
\[
	3^2, 3^1 < 2^2, 2^1 < 1^2, 1^1 < 0^0 < 1^0 < 2^0 < 3^0.
\]
This ordering allows us to define the \emph{descent set} of $(\eps,\pi) \in \ZZ_r \wr \symm_n$ by
\[
	\Des(\eps,\pi) = \{i \in [0,n-1] \mid \pi(i)^{\eps_i} > \pi(i+1)^{\eps_{i+1}}\}
\]
and the \emph{descent statistic} by $\des(\eps,\pi) = |\Des(\eps,\pi)|$.
In particular, we let $\Des(\pi)=\Des({\bf 0},\pi)$ where ${\bf 0}$ is the all-zero vector.  So $\Des(\pi)$ is the ordinary descent set of $\pi$ as an element of $\symm_n$.
An associated statistic is the \emph{major index}, defined as
\[
	\maj(\eps,\pi) = \sum_{i \in \Des(\eps,\pi)} i.
\]
We also define the \emph{color weight of} $(\eps,\pi) \in \ZZ_r \wr \symm_n$ to be 
\[
	\col(\eps,\pi) = \sum_{i=1}^n \eps_i.
\]
Since the color weight only depends on $\eps$, we can similarly define the \emph{color weight of} $\eps \in \ZZ_r^n$ to be
\[
	\col(\eps) = \sum_{i=1}^n \eps_i.
\]

In \cite{BeckBraun13}, the authors created vast generalizations of many identities involving statistics on the group of permutations, on signed permutations, and, to a more limited extent, on general wreath products.
Their techniques involved recognizing the identities as specializations of integer point generating functions for the cubes $[0,1]^n$ and $[-1,1]^n$.
Among the identities in which Beck and Braun were interested is the following $q,u$-analogue, originally proven by Biagioli and Zeng.
Recall that for a variable $q$ and nonnegative integer $n$,
\[
	[n]_q = \frac{1-q^n}{1-q}, 
\]
so that $[n]_q = 1 + q + \dots + q^{n-1}$ when $n > 0$ and $[0]_q = 0$.

\begin{thm}[Equation (8.1), \cite{BiagioliZengWreath}]\label{thm: BZ}
	For any positive integers $r, n$,
	\begin{equation}\label{eq: BZ}
		\sum_{k \geq 0} ([k+1]_q + u[r-1]_u[k]_q)^nt^k = \frac{\sum_{(\eps,\pi) \in \ZZ_r \wr \symm_n} q^{\maj(\eps,\pi)}t^{\des(\eps,\pi)}u^{\col(\eps,\pi)}}{\prod_{j=0}^n(1-q^jt)}.
	\end{equation}
\end{thm}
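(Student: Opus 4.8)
The plan is to realize the left-hand side of \eqref{eq: BZ} as the integer-point transform of a half-open polyhedral cone and then to triangulate, following the Beck--Braun philosophy. First I would rewrite the summand. Using $[k+1]_q=\sum_{m=0}^k q^m$ and $u[r-1]_u[k]_q=\sum_{c=1}^{r-1}u^c\sum_{m=0}^{k-1}q^m$, expanding the $n$th power and collecting colors gives
\begin{equation*}
([k+1]_q+u[r-1]_u[k]_q)^n=\sum_{\eps\in[0,r-1]^n}u^{\col(\eps)}\sum_{m}q^{m_1+\dots+m_n},
\end{equation*}
where, for a fixed color vector $\eps$, the inner sum is over all $m\in\ZZ^n$ with $0\le m_i\le k$ if $\eps_i=0$ and $0\le m_i\le k-1$ if $\eps_i>0$. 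Multiplying by $t^k$ and summing over $k$ then expresses the left-hand side as $\sum_{\eps}u^{\col(\eps)}\,\sigma_\eps$, where $\sigma_\eps$ is the specialization (each cube coordinate $\mapsto q$, height $\mapsto t$) of the integer-point transform of the half-open cone
\begin{equation*}
\mathcal{C}_\eps=\{(x,y)\in\RR^n\times\RR_{\ge0}\mid 0\le x_i\le y\text{ for all }i,\ \text{and } x_i<y\text{ for }i\in\supp(\eps)\}.
\end{equation*}
Geometrically $\mathcal{C}_\eps$ is the cone over the unit cube $[0,1]^n$ with the facets $\{x_i=1\}$, $i\in\supp(\eps)$, declared open.

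Next I would triangulate and apply a half-open decomposition. For fixed $\eps$, I would cut $[0,1]^n$ into the $n!$ simplices $\nabla_\pi=\{x\mid 1\ge x_{\pi(1)}\ge\dots\ge x_{\pi(n)}\ge0\}$, $\pi\in\symm_n$, cone each over the origin, and open facets (Stanley-style) so that the lattice points of $\mathcal{C}_\eps$ are distributed, without repetition, among the resulting simplicial cones. Each such cone has exactly $n+1$ ray generators sitting at height one, so after specialization every cone contributes the same denominator $\prod_{j=0}^n(1-q^jt)$, matching the right-hand side of \eqref{eq: BZ}; all of the combinatorial content is therefore in the numerators. Since there are $r^n$ colorings and $n!$ simplices, the total number of pieces is $r^n\,n!=|\ZZ_r\wr\symm_n|$, which is the count we need.

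The crux, and the step I expect to be the main obstacle, is to show that the half-open fundamental parallelepiped of the simplicial cone attached to a pair (coloring $\eps$, sorting permutation $\pi$) contributes exactly $q^{\maj(\eps',\pi)}t^{\des(\eps',\pi)}$, where $\eps'$ is the position-indexed color vector defined by $\eps'_i=\eps_{\pi(i)}$ and the descent statistics are taken in the BZ order. Two bookkeeping points make this delicate. First, the colors in the cube are indexed by coordinate while the descent set of $(\eps',\pi)$ reads colors by window position, so the bijection must reindex colors along $\pi$; fortunately $\col$ is symmetric in its entries, so $\col(\eps')=\col(\eps)$ and the $u$-weight is preserved. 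Second, and more seriously, I must match the opened facets to the Biagioli--Zeng order. The facet $\{x_{\pi(1)}=1\}$ is open precisely when $\pi(1)$ is colored, and rule~(3) makes this exactly the condition for $0\in\Des(\eps',\pi)$; likewise the reversal ``$j>k$ for colored letters'' in rules~(1) and~(2) must be reconciled with the direction in which the opened facets push lattice points across the parallelepiped. I would verify the numerator by writing down an explicit fundamental-domain representative for each coset of the lattice modulo the cone generators and reading off the exponents of $q$ and $t$, confirming that a descent at position $i$ contributes $i$ to $\maj$ and $1$ to $\des$ exactly as $\Des(\eps',\pi)$ prescribes. Carefully handling the boundary cases where several coordinates coincide is the most error-prone part.

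Finally, summing the numerators over $\pi\in\symm_n$ for fixed $\eps$ collects $\sum_\pi q^{\maj(\eps',\pi)}t^{\des(\eps',\pi)}$ over the common denominator, and then summing over $\eps\in[0,r-1]^n$ against $u^{\col(\eps)}$---equivalently, summing $(\eps',\pi)$ over all of $\ZZ_r\wr\symm_n$---assembles precisely the numerator $\sum_{(\eps,\pi)}q^{\maj(\eps,\pi)}t^{\des(\eps,\pi)}u^{\col(\eps,\pi)}$ on the right-hand side of \eqref{eq: BZ}, which would complete the proof.
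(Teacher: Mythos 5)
Your setup is sound, and it is in fact the same geometric framework as the paper's: your expansion of $([k+1]_q+u[r-1]_u[k]_q)^n$ over color vectors is exactly the paper's monomial map $m$ combined with its decomposition of $\cone([0,r]^n)$ into cones over half-open unit cubes $F_\eps$, and the order-simplex triangulation with common denominator $\prod_{j=0}^n(1-q^jt)$ is the right next move. But the step you defer as ``the main obstacle'' is not deferred bookkeeping --- it is the entire content of the theorem, and as you state it, it is false. You claim the piece attached to $(\eps,\pi)$, namely the cone over $\nabla_\pi$ suitably half-opened inside $\mathcal{C}_\eps$, should have numerator $q^{\maj(\eps',\pi)}t^{\des(\eps',\pi)}$ with $\eps'_i=\eps_{\pi(i)}$. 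Take $n=r=2$ and $\eps=(1,1)$: the piece of $\mathcal{C}_{(1,1)}$ coming from $\nabla_{12}$ is $\{y>x_1\ge x_2\ge 0\}$, whose numerator is $t$ (the only opened facet is the top one, contributing the generator mapping to $t$); but the colored permutation $[1^1\,2^1]$ has $\Des=\{0,1\}$ in the BZ order, because colored letters compare in \emph{reverse}, so $q^{\maj}t^{\des}=qt^2$. The two pieces for $\eps=(1,1)$ do sum to $t+qt^2$, matching $[2^1\,1^1]$ and $[1^1\,2^1]$ together, but the term-by-term identification you propose to verify does not hold with the natural attachment; it only holds after composing with the permutation that reverses the colored letters.

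That reversal is precisely the idea missing from your proposal, and it is the heart of the paper's argument: in Proposition~\ref{prop: cone for few colors} the paper introduces $\rho=l(l-1)\cdots 21(l+1)\cdots n$ and shows $\Des(\gamma,\pi)\setminus\{0\}=\Des(\rho\pi)$, so that the piece indexed by the \emph{ordinary} Stanley cone of $\sigma=\rho\pi$ carries numerator $q^{\maj(\gamma,\pi)}t^{\des(\gamma,\pi)}$ (the possible BZ descent at position $0$ contributes $t$ but no power of $q$, matching the opened top facet exactly when $\sigma(1)$ is colored); Lemma~\ref{lem: triple-preserving} supplies the analogous order-preserving relabeling $\omega$ for unsorted $\eps$. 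A second, related point: the paper first reduces to sorted vectors $\oeps=(1,\dots,1,0,\dots,0)$ (Lemma~\ref{lem: same support} and Corollary~\ref{cor: inner sum}) before touching the triangulation, and this is not cosmetic. For sorted $\oeps$ the colored letters are the \emph{smallest} ones, so whenever an uncolored letter immediately precedes a colored one in $\sigma$ there is automatically a descent, and the Stanley strictness already enforces the constraint $x_i<y$ on colored coordinates; for a general $\eps$ (try $\eps=(0,0,1)$ and $\sigma=123$) the constraint $x_{\sigma(j_0)}<y$ removes a face of codimension $j_0>1$ rather than a facet, so the intersection of your triangulation with $\mathcal{C}_\eps$ is not a union of half-open simplicial cones with monomial numerators at all. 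Your plan therefore reduces the identity to exactly the two statements that still need proving, and the natural way to prove them is the paper's $\rho$/$\omega$ mechanism together with the sort-first reduction.
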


This identity resisted the proof techniques of Beck and Braun.
Nevertheless, a geometric proof does exist, and the purpose of this article is to provide one.

\section{Main Proof}

\subsection{Monomial Association}

Examining the left side of Equation~\eqref{eq: BZ} for a fixed $k$ leads one to believe that there is some connection with hypercubes.
Since there is a sum of $kr+1$ monomials being raised to the $n^{th}$ power, it suggests one should be able to associate the lattice points of $[0,kr]^n$ with the monomials of 
$([k+1]_q + u[r-1]_u[k]_q)^n$ in some manner.
After doing so, multiplying by $t^k$ and summing over all $k$ will result in bijectively associating each lattice point of the cone 
\[
	\cone([0,r]^n) = \{ (\alpha x,\alpha) \in \RR^{n+1} \mid x \in [0,r]^n, \alpha \in \RR\}
\]
with a monomial occurring on the left side of \eqref{eq: BZ}.
This cone can, of course, be constructed for any set $S \subset \RR^n$ by replacing $[0,r]^n$ with $S$. 

To simplify notation, let $C_{r,n} = \cone([0,r]^n)$, 
and let $e_i$ be the $i^{th}$ standard basis vector in ${\mathbb R}^{n+1}$.
For a particular nonnegative integer $k$, consider the points $(v,k) \in C_{r,n} \cap \ZZ^{n+1}$ lying on the hyperplane $x_{n+1} = k$.
For each $j = 0, \dots, kr$ and $i = 1,\dots,n$, set
\[
	m'(je_i,k) = \begin{cases}
				q^j & \text{ if } 0 \leq j \leq k, \\
				q^{(j-1) \mmod k}\ u^{\lfloor (j-1)/k\rfloor} & \text{ if } k < j \leq kr.
			\end{cases}
\]
where $(j-1) \mmod k$ is always chosen from $[0,k-1]$.
It follows easily from this definition that for any $i$ we have
\begin{equation}
\label{m'}
\sum_{j=0}^{kr} m'(je_i,k) =[k+1]_q + u[r-1]_u[k]_q.
\end{equation}
This association allows us to construct a map $m: C_{r,n} \cap \ZZ^{n+1} \to \CC[q,t,u]$ defined by
\[
	m(v_1,\dots,v_n,k) = \left(\prod_{i=1}^n m'(v_ie_i,k)\right)t^k.
\]
The cases of $r=n=2$ and $k \in \{1,2\}$ are depicted in Figure~\ref{fig: lattice pts}.

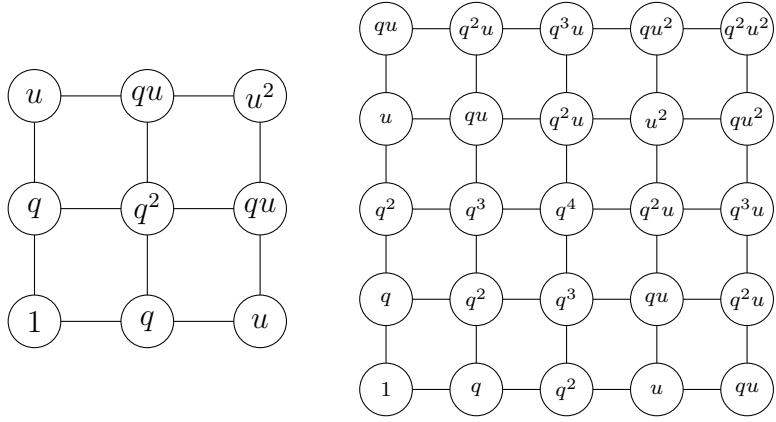
\begin{figure}
\begin{center}
	\begin{tikzpicture}[scale=1.5]
		\draw[step=1.0] (0,0) grid (2,2);
		
		\foreach \x in {0,1,2}
		{
			\foreach \y in {0,1,2} 
			{
				\node[draw,circle,fill=white,minimum size=0.7cm] at (\x,\y) {};
			}
		}
		
		\draw (0,0) node {$1$};
		\draw (1,0) node {$q$};
		\draw (2,0) node {$u$};
		\draw (0,1) node {$q$};
		\draw (0,2) node {$u$};
		\draw (1,1) node {$q^2$};
		\draw (1,2) node {$qu$};
		\draw (2,1) node {$qu$};
		\draw (2,2) node {$u^2$};
		
		\draw (0,-0.75) node {};
	\end{tikzpicture}
	\qquad
	\begin{tikzpicture}[scale=1.2]
		\draw[step=1.0] (0,0) grid (4,4);
		
		\foreach \x in {0,...,4}
		{
			\foreach \y in {0,...,4} 
			{
				\node[draw,circle,fill=white,minimum size=0.7cm] at (\x,\y) {};
			}
		}
	\draw (0,0) node {\scriptsize{$1$}};
		\draw (1,0) node {\scriptsize{$q$}};
		\draw (2,0) node {\scriptsize{$q^2$}};
		\draw (3,0) node {\scriptsize{$u$}};
		\draw (4,0) node {\scriptsize{$qu$}};
		\draw (0,1) node {\scriptsize{$q$}};
		\draw (0,2) node {\scriptsize{$q^2$}};
		\draw (0,3) node {\scriptsize{$u$}};
		\draw (0,4) node {\scriptsize{$qu$}};	
		\draw (1,1) node {\scriptsize{$q^2$}};
		\draw (2,1) node {\scriptsize{$q^3$}};
		\draw (3,1) node {\scriptsize{$qu$}};
		\draw (4,1) node {\scriptsize{$q^2u$}};
		\draw (1,2) node {\scriptsize{$q^3$}};
		\draw (2,2) node {\scriptsize{$q^4$}};
		\draw (3,2) node {\scriptsize{$q^2u$}};
		\draw (4,2) node {\scriptsize{$q^3u$}};
		\draw (1,3) node {\scriptsize{$qu$}};
		\draw (2,3) node {\scriptsize{$q^2u$}};	
		\draw (3,3) node {\scriptsize{$u^2$}};
		\draw (4,3) node {\scriptsize{$qu^2$}};
		\draw (1,4) node {\scriptsize{$q^2u$}};
		\draw (2,4) node {\scriptsize{$q^3u$}};
		\draw (3,4) node {\scriptsize{$qu^2$}};	
		\draw (4,4) node {\scriptsize{$q^2u^2$}};
	\end{tikzpicture}
\end{center}
	\caption{The monomials $m(v,k)$, with factors of $t$ divided out, associated to the lattice points in $([0,2]^2,1)$, left, and$([0,4]^2,2)$, right.}\label{fig: lattice pts}
\end{figure}

From the definition of $m$ and equation~\eqref{m'} we have, for fixed $k$,
\[
	\sum_{(v,k) \in ([0,kr]^n,k) \cap \ZZ^{n+1}} m(v,k) = ([k+1]_q + u[r-1]_u[k]_q)^nt^k.
\]
Summing over all $k$, we therefore have
\begin{equation}\label{eq: sum over cone}
	\sum_{k \geq 0} ([k+1]_q + u[r-1]_u[k]_q)^nt^k = \sum_{(v,k) \in C_{r,n} \cap \ZZ^{n+1}} m(v,k).
\end{equation}
This provides us with a geometric interpretation of the left side of \eqref{eq: BZ}.

\subsection{Cubical Decomposition}

Given $\eps = (\eps_1,\dots,\eps_n) \in \ZZ_r^n$, define the \emph{support} of $\eps$ to be the set
\[
	\supp(\eps) = \{i \mid \eps_i > 0\}.
\]
The \emph{cube associated to $\eps$} is
\[
	F_{\eps} = \left(\eps + [0,1]^n \right) \setminus \left(\bigcup_ {i \in \supp(\eps)} \{x \in \RR^n \mid x_i = \eps_i\}\right).
\]
So, if $\eps \neq 0$, then $F_{\eps}$ is partially open.
It is clear by construction that
\[
	[0,r]^n = \bigcup_{\eps \in \ZZ_r^n} F_{\eps}
\]
and that the union is disjoint.
See Figure~\ref{fig: associated cubes} for the decomposition for $n=r=2$.
This decomposition of the cube induces a decomposition of its cone, namely,
\begin{equation}\label{eq: disjoint union}
	C_{r,n} = \cone([0,r]^n) = \biguplus_{\eps \in \ZZ_r^n} \cone(F_{\eps}).
\end{equation}

\begin{figure}
\begin{center}
	\begin{tikzpicture}[scale=2]
		\draw[thick, fill = gray, fill opacity = 0.3] (0,0) -- (1,0) -- (1,1) -- (0,1) -- cycle;
		
		\draw[draw=none,fill = gray, fill opacity = 0.3] (1.2,0) -- (2.2,0) -- (2.2,1) -- (1.2,1) -- cycle;
		\draw[thick] (1.2,0) -- (2.2,0) -- (2.2,1) -- (1.2,1);
		\draw[dashed,thick] (1.2,0) -- (1.2,1);

		\draw[draw=none, fill = gray, fill opacity = 0.3] (0,1.2) -- (1,1.2) -- (1,2.2) -- (0,2.2) -- cycle;
		\draw[thick] (1,1.2) -- (1,2.2) -- (0,2.2) -- (0,1.2);
		\draw[dashed, thick] (1,1.2) -- (0,1.2);

		\draw[draw=none, fill = gray, fill opacity = 0.3] (1.2,1.2) -- (2.2,1.2) -- (2.2,2.2) -- (1.2,2.2) -- cycle;
		\draw[thick] (2.2,1.2) -- (2.2,2.2) -- (1.2,2.2);
		\draw[dashed, thick] (1.2,2.2) -- (1.2,1.2) -- (2.2,1.2);
		
		\foreach \x in {(0,0),(1,0),(0,1),(1,1),(2.2,0),(2.2,1),(0,2.2),(1,2.2),(2.2,2.2)}
		{
			\node[draw,circle,fill,inner sep = 1.5pt] at \x {};
		}
		\foreach \x in {(1.2,0),(1.2,1),(0,1.2),(1,1.2),(1.2,1.2),(1.2,2.2),(2.2,1.2)}
		{
			\node[draw,circle,fill=white,inner sep = 1.5pt] at \x {};
		}
		
		\node at (0.5,0.5) {$F_{(0,0)}$};
		\node at (0.5,1.7) {$F_{(0,1)}$};
		\node at (1.7,0.5) {$F_{(1,0)}$};
		\node at (1.7,1.7) {$F_{(1,1)}$};

	\end{tikzpicture}
\end{center}
	\caption{The four cubes associated to the elements $\eps \in \ZZ_2^2$ which decompose $[0,2]^2$.
}\label{fig: associated cubes}
\end{figure}
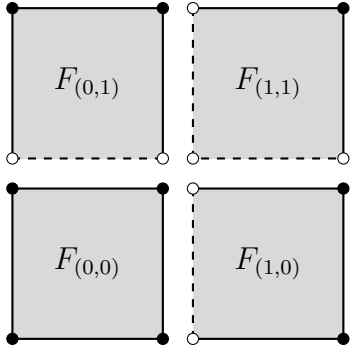

The cubes associated to $\eps \in \ZZ_r^n$ allow us to associate monomials according to which lattice points lie inside of $F_{\eps}$.
It will be easiest to obtain rational function expressions for
\[
	\sum_{(v,k) \in \cone(F_{\eps}) \cap \ZZ^{n+1}} m(v,k)
\]
by using relationships among the various $\eps \in \ZZ_r^n$.
First, we will see what happens when $\supp(\eps) = \supp(\eps')$.  

\begin{lemma}\label{lem: same support}
	If $\eps,\eps' \in \ZZ_r^n$ and $\supp(\eps) = \supp(\eps')$, then for any $\pi \in \symm_n$,
	\[
		\Des(\eps',\pi) = \Des(\eps,\pi).
	\]
	Consequently,
	\[
	\begin{aligned}
		\des(\eps',\pi) &= \des(\eps,\pi)\\
		\maj(\eps',\pi) &= \maj(\eps,\pi),		
	\end{aligned}
	\]
Also,
	\[
		\sum_{(v,k) \in \cone(F_{\eps}) \cap \ZZ^{n+1}} m(v,k) = \left(\sum_{(v',k) \in \cone(F_{\eps'}) \cap \ZZ^{n+1}} m(v',k)\right)u^{\col(\eps) - \col(\eps')}.
	\]
\end{lemma}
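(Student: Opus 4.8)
The plan is to establish the three assertions in sequence, deriving the $\des$ and $\maj$ equalities as immediate corollaries of the descent-set equality, and then proving the cone identity through an explicit weight-shifting bijection.

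First I would prove that $\Des(\eps,\pi) = \Des(\eps',\pi)$ by observing that the BZ comparison of two consecutive entries $\pi(i)^{\eps_i}$ and $\pi(i+1)^{\eps_{i+1}}$ depends only on the base values $\pi(i)$ and $\pi(i+1)$---fixed once $\pi$ is fixed---together with the coarse data of whether each color is zero or positive, and never on the precise positive value. This is visible directly from the three defining conditions of the ordering, each of which is phrased in terms of the base values and the dichotomy ``$c = 0$'' versus ``$c > 0$''; the convention $\pi(0)^{\eps_0} = 0^0$ takes care of the index $i = 0$ through condition (3). Since $\supp(\eps) = \supp(\eps')$ says precisely that $\eps_i > 0 \iff \eps'_i > 0$ for every $i$, the two colorings yield the same comparison outcome at each adjacent pair, forcing equality of the descent sets. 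The equalities $\des(\eps',\pi) = \des(\eps,\pi)$ and $\maj(\eps',\pi) = \maj(\eps,\pi)$ then follow since $\des$ and $\maj$ are, respectively, the cardinality and the element-sum of $\Des$.

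For the cone identity I would first dispose of the case $\eps = \mathbf 0$, where $\supp(\eps) = \emptyset$ forces $\eps' = \mathbf 0$ and the claim is trivial. Assuming $\eps, \eps' \neq \mathbf 0$, the strict inequalities $v_i > k\eps_i$ for $i \in \supp(\eps)$ that cut out the half-open cone $\cone(F_\eps)$ admit no lattice points at height $k = 0$, so neither cone contains the apex and every relevant point has $k \geq 1$. For such $k$ a point $(v,k) \in \cone(F_\eps) \cap \ZZ^{n+1}$ is described by $v_i \in \{0,\dots,k\}$ when $i \notin \supp(\eps)$ and $v_i \in \{k\eps_i+1,\dots,k\eps_i+k\}$ when $i \in \supp(\eps)$. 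I would then define $\phi(v,k) = (v',k)$ with $v'_i = v_i + k(\eps'_i - \eps_i)$, which carries these coordinate ranges exactly to those of $\cone(F_{\eps'})$ and is a bijection whose inverse has the same shape.

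The heart of the matter is comparing $m(v,k)$ with $m(\phi(v,k))$ factor by factor. For $i \notin \supp$ we have $v'_i = v_i \in \{0,\dots,k\}$, so $m'(v_ie_i,k) = m'(v'_ie_i,k) = q^{v_i}$. For $i \in \supp$, writing $v_i = k\eps_i + s$ with $s \in \{1,\dots,k\}$ yields $\lfloor (v_i-1)/k\rfloor = \eps_i$ and $(v_i-1) \mmod k = s-1$, so $m'(v_ie_i,k) = q^{s-1}u^{\eps_i}$; since $v'_i = k\eps'_i + s$ carries the same $s$, we get $m'(v'_ie_i,k) = q^{s-1}u^{\eps'_i}$. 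Hence each coordinate contributes a factor $u^{\eps_i - \eps'_i}$, giving $m(v,k) = u^{\col(\eps)-\col(\eps')}m(\phi(v,k))$, and summing over $\phi$ produces the stated identity. The hard part will be the bookkeeping around the apex: I must be sure the half-open decomposition genuinely excludes $(\mathbf 0,0)$ from both cones under comparison, since a stray $+1$ surviving at height $0$ would break the uniform scaling by $u^{\col(\eps)-\col(\eps')}$. By contrast, the floor-and-mod computation, though indispensable, is routine once the coordinate ranges are fixed.
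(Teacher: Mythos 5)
Your proposal is correct and takes essentially the same route as the paper: the descent-set equality rests on the same observation that the BZ comparison depends only on the zero-versus-positive dichotomy of the colors (the paper just organizes this into three explicit cases), and the cone identity uses the very same shift bijection $(v,k)\mapsto\bigl(v+k(\eps'-\eps),k\bigr)$. Your explicit floor-and-mod computation and the care about excluding the apex for $\eps\neq\mathbf{0}$ simply fill in details that the paper compresses into ``from the definition of $m'$ and $m$ we see,'' and that apex bookkeeping is indeed needed for the monomial relation to hold uniformly.
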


\begin{proof}
	Suppose $i \in \Des(\eps,\pi)$, so that $\pi(i)^{\eps_i} > \pi(i+1)^{\eps_{i+1}}$.
	This leads to three cases:
	\begin{enumerate}
		\item $\eps_i,\eps_{i+1} > 0$ and $\pi(i) < \pi(i+1)$;
		\item $0 = \eps_i < \eps_{i+1}$;
		\item $0 = \eps_i = \eps_{i+1}$ and $\pi(i) > \pi(i+1)$.
	\end{enumerate}
	In each case it is straightforward to verify that replacing $\eps_i$ with $\eps'_i$ and $\eps_{i+1}$ with $\eps'_{i+1}$ preserves the descent, so that $\Des(\eps,\pi) \subseteq \Des(\eps',\pi)$.
	The reverse containment follows by the same argument, swapping the roles of $\eps$ and $\eps'$.

 For the last equality of the lemma, since $\supp(\eps) = \supp(\eps')$ we have a bijection 
\[
\cone(F_{\eps'}) \cap \ZZ^{n+1}\rightarrow \cone(F_{\eps}) \cap \ZZ^{n+1}
\]
given by $(v',k)\mapsto(v,k)$ where $v=v' +k(\eps-\eps')$.  From the definition of $m'$ and $m$ we see that under this correspondence $m(v,k)=m(v',k) u^{\col(\eps) - \col(\eps')}$ and the summation follows.
\end{proof}

Next, we will obtain the desired rational function for special choices of $\eps$, where the previous lemma allows us to focus on such vectors which only contain zeros and ones.
For the proof, we will need to refine the cubes associated to the $\eps$.
Recall that the \emph{(type $A$) braid arrangement} is the hyperplane arrangement
\[
	\mathcal{A}_n = \bigcup_{1 \leq i < j \leq n} \{ (x_1,\dots,x_n) \in \RR^n \mid x_i = x_j \}.
\]
Its complement in $\RR^n$, that is, $\RR^n \setminus \mathcal{A}_n$, consists of the \emph{Weyl chambers}
\[
	W_{\pi} = \{x_{\pi(1)} > \dots > x_{\pi(n)}\},
\]
which range over all $\pi \in \symm_n$.
The intersections $W_{\pi} \cap [0,1]^n$ result in simplices, some of whose faces have been removed. The descent set of permutations can be used to decompose $[0,1]^n$ into the disjoint, partially-open simplices
\[
	\Delta_{\pi} = \{ x \in \RR_{\geq 0}^n \mid 1 \ge x_{\pi(1)} \ge \dots \ge x_{\pi(n)} \ge 0, \text{ and }  x_{\pi(i)} > x_{\pi(i+1)} \text{ if } i \in \Des(\pi)\}
\]
such that $\{\overline{\Delta}_{\pi}\}_{\pi \in \symm_n}$ is a unimodular triangulation of $[0,1]^n$.
These facts follow from arguments related to the theory of $P$-partitions; see \cite[Remark~4.2]{BeckBraun13} for a treatment of this connection.

It will be helpful to establish the following notation:
given $\eps = (\eps_1,\dots, \eps_n) \in \ZZ_r^n$, let
\[
\begin{aligned}
		G_{\eps} &= \{((\eps_{\pi(1)},\dots, \eps_{\pi(n)}),\pi) \in \ZZ_r \wr \symm_n \mid \pi \in \symm_n\} \\
				&= \{ [ \pi(1)^{\eps_{\pi(1)}}\, \dots\, \pi(n)^{\eps_{\pi(n)}}] \mid \pi \in \symm_n \}.
\end{aligned}
\]
In other words, the color associated to a letter stays fixed when ranging over all elements in $G_{\eps}$.

\begin{example}\label{ex: (1,0,1)}
Suppose $\eps = (1,0,1)$.
The set $G_{\eps}$ is
\[
	G_{\eps} = \{ [1^1\, 2^0\, 3^1], [1^1\, 3^1\, 2^0], [2^0\, 1^1\, 3^1], [2^0\, 3^1\, 1^1], [3^1\, 1^1\, 2^0], [3^1\, 2^0\, 1^1] \}.
\]
\[\begin{aligned}
	\Des [1^1\, 2^0\, 3^1] &= \{0,2\}, \\
	\Des [1^1\, 3^1\, 2^0] &= \{0,1\}, \\
	\Des [2^0\, 1^1\, 3^1] &= \{1,2\}, \\
	\Des [2^0\, 3^1\, 1^1] &= \{1\}, \\
	\Des [3^1\, 1^1\, 2^0] &= \{0\}, \\
	\Des [3^1\, 2^0\, 1^1] &= \{0,2\}.
\end{aligned}\]
\end{example}

We will now relate the sum of weights in a over $F_\eps$ to the generating function for $\maj$ and $\des$ over the corresponding $G_\eps$ for a certain $(0,1)$-vector $\eps$.
Following the proof will be an example that illustrates the techniques used.

\begin{prop}\label{prop: cone for few colors}
	Suppose $\eps = (1,\dots,1,0,\dots,0)$ where there are $l$ copies of $1$ so that $\col(\eps)=l$.
	Then
	\[
		\sum_{(v,k) \in \cone(F_{\eps}) \cap \ZZ^{n+1}} m(v,k) 
= \frac{\sum_{(\gamma,\pi) \in G_{\eps}} q^{\maj(\gamma,\pi)}t^{\des(\gamma,\pi)}u^{\col(\eps)}}{\prod_{j=0}^n (1-q^jt)}.
	\]
\end{prop}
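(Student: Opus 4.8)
The plan is to compute the integer-point transform on the left-hand side directly, by refining $\cone(F_\eps)$ into unimodular simplicial cones indexed by the elements of $G_\eps$ and summing a geometric series over each. First I would record how $m$ behaves on this cone. For $(v,k)\in\cone(F_\eps)$ a coordinate $i\notin\supp(\eps)$ satisfies $0\le v_i\le k$ and contributes $m'(v_ie_i,k)=q^{v_i}$, while a coordinate $i\in\supp(\eps)$ satisfies $k<v_i\le 2k$ and contributes $m'(v_ie_i,k)=q^{v_i-k-1}u$. Setting $y_i=v_i$ for $i>l$ and $y_i=v_i-k-1$ for $i\le l$, each factor becomes $q^{y_i}$ (times $u$ in the colored case), so $m(v,k)=u^l\,q^{y_1+\dots+y_n}\,t^k$ where $0\le y_i\le k$ for $i>l$ and $0\le y_i\le k-1$ for $i\le l$. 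This pulls the factor $u^{\col(\eps)}=u^l$ out front and reduces the claim to a chain generating-function identity in $q$ and $t$.

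Next I would decompose the box of admissible vectors $y$ (equivalently, refine $\cone(F_\eps)$) using the braid arrangement adapted to the BZ order, producing one partially-open simplicial cone for each $w=(\gamma,\pi)\in G_\eps$: namely those $(v,k)$ whose vector $y$ is weakly decreasing along the positions of $w$, that is, $k\ge y_{\pi(1)}\ge\dots\ge y_{\pi(n)}\ge 0$, with a strict inequality exactly at each $p\in\Des(\gamma,\pi)$, where $p=0$ refers to the top inequality $k>y_{\pi(1)}$. Writing $d_0=k-y_{\pi(1)}$, $d_p=y_{\pi(p)}-y_{\pi(p+1)}$ for $1\le p\le n-1$, and $d_n=y_{\pi(n)}$, one has $\sum_{j=0}^n d_j=k$ and $\sum_i y_i=\sum_{j=1}^n j\,d_j$, so $m(v,k)=u^l\prod_{j=0}^n(q^jt)^{d_j}$ with each $d_j\ge 0$ and $d_j\ge 1$ precisely when $j\in\Des(\gamma,\pi)$. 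Summing each geometric series gives $u^l\,q^{\maj(\gamma,\pi)}t^{\des(\gamma,\pi)}/\prod_{j=0}^n(1-q^jt)$, and summing over $G_\eps$ produces the stated formula. Unimodularity of each cone is inherited from the $\overline{\Delta}_\pi$ triangulation of $[0,1]^n$ via the reflections $y_i=v_i-k-1$.

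The main obstacle is the half-open bookkeeping that makes the previous paragraph exact, and in particular reconciling two sources of strict inequalities: the facets removed from $F_\eps$ at the colored coordinates (forcing $v_i>k$, i.e. forcing each colored $y_i$ to stay strictly below $k$), and the facets removed in passing to the partially-open simplices of the braid triangulation. I would show these combine to force $d_p\ge 1$ at exactly the BZ descents of $w$. The delicate points are (i) that a colored first letter forces the top gap $d_0\ge1$, which is precisely the descent at position $0$ coming from the comparison with $0^0$; and (ii) that when the first colored letter sits at a position $p_0>1$ it is immediately preceded by an uncolored letter, forcing a BZ descent at $p_0-1$. In either case the colored upper bound $y_i\le k-1$ is automatically enforced by a strict inequality already counted by $\Des(\gamma,\pi)$, so it contributes no extra factor. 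Verifying that these regions form a disjoint cover of $\cone(F_\eps)\cap\ZZ^{n+1}$, with the correct tie-breaking placing a colored letter before a tied uncolored one, is where the care is required; once this is in place, the evaluation of each integer-point transform is routine.
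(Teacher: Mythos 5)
Your proposal is correct, and at the top level it follows the same P-partition-style plan as the paper: reduce to a half-open box (the paper's first step, $\sum_{(v,k)} m(v,k) = u^l\sum_{k\ge 0}[k]_q^l[k+1]_q^{n-l}t^k$, is exactly your change of variables $y_i$), decompose that box into half-open chains indexed by $G_{\eps}$, and evaluate each piece. The execution, however, is genuinely different. The paper's key device is the auxiliary permutation $\rho = l(l-1)\cdots 21(l+1)\cdots n$: it converts the BZ descents of $(\gamma,\pi)$ into ordinary descents of $\rho\pi$ (the descent at position $0$ being absorbed into the top bound $k-\eps_{\rho\pi(1)}$), so the paper can sort each lattice point along $\rho\pi$, quote the standard braid-arrangement decomposition with the usual tie-breaking, and then evaluate each region by the bijection $\lambda_i = \alpha_{\rho\pi(i)} - |\Des(\rho\pi)\setminus[i-1]|$ onto partitions in a box, invoking the known generating function for such partitions. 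You never introduce $\rho$: you sort along $\pi$ itself, with strictness dictated by the BZ descent set and with BZ tie-breaking (a colored letter precedes a tied uncolored one, and a larger colored index precedes a smaller tied one), and you evaluate via the gap variables $d_j$ as a product of geometric series. The two decompositions really are different as set partitions --- for $n=l=2$ the paper assigns the region $\{k>\alpha_2>\alpha_1\ge 0\}$ to $[1^1\,2^1]$, while you assign $\{k>y_1>y_2\ge 0\}$ to it --- yet each pairing of region with element of $G_{\eps}$ yields the same rational function, so both totals agree. The trade-off: the paper gets the disjoint-cover property for free from the standard theory of the $\Delta_{\pi}$, at the cost of the $\rho$ bookkeeping and a less transparent evaluation; you avoid $\rho$ and make $q^{\maj(\gamma,\pi)}t^{\des(\gamma,\pi)}$ appear immediately from the $d_j$, at the cost of having to prove the cover and half-open facts yourself. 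Your items (i) and (ii) and the tie-breaking rule are precisely the right facts for that verification, and they are correct (the colored coordinates' upper bounds are absorbed exactly by the strict inequalities at BZ descents), so the remaining check is indeed routine.
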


\begin{proof}
	It is clear from the definition of $m$ that
	\begin{equation}\label{eq: m}
		\sum_{(v,k) \in \cone(F_{\eps}) \cap \ZZ^{n+1}} m(v,k) = u^l\sum_{k \geq 0} [k]_q^l[k+1]_q^{n-l}t^k.
	\end{equation}
	The coefficient of $q^rt^k$ in the above series therefore has the interpretation as the number of compositions $\alpha = (\alpha_1,\dots,\alpha_n)$ of $r$ (where parts equal to zero are allowed) such that
	the first $l$ parts are at most $k-1$ and the remaining $n-l$ parts are at most $k$.
	
	Now, note that the permutation
	\[
		\rho = l(l-1)\dots 21(l+1)(l+2)\dots n
	\]
	satisfies
	\[
		\rho\pi(i) = \begin{cases}
						l+1-\pi(i) & \text{ if } \pi(i) \leq l \\
						\pi(i) & \text{ if } \pi(i) > l
					\end{cases}
	\]
	for any $\pi \in \symm_n$.
	Based on the BZ ordering of the letters in $\ZZ_r \wr \symm_n$, we can conclude that for all $\pi \in \symm_n$, and setting $\gamma = (\eps_{\pi(1)},\dots,\eps_{\pi(n)})$,
	\[
		\Des(\gamma, \pi) = \begin{cases}
							\Des(\rho\pi) \uplus \{0\} & \text{ if } 0 \in \Des(\gamma,\pi) \\
							\Des(\rho\pi) & \text{ if } 0 \notin \Des(\gamma,\pi)
						\end{cases}.
	\]
	Recall that the set of all $(\gamma, \pi)$ constructed this way is exactly $G_{\eps}$.
	
	From the discussion following Lemma~\ref{lem: same support}, we know that for a particular composition $\alpha$ of the type we have been considering,
	there is exactly one $\pi \in \symm_n$ for which
	\begin{equation}\label{eq: inequalities}
		k- \eps_{\rho\pi(1)} \ge \alpha_{\rho\pi(1)} \ge \alpha_{\rho\pi(2)} \ge \dots \ge \alpha_{\rho\pi(n)} \ge 0,
	\end{equation}
	where $\alpha_{\rho\pi(i)} > \alpha_{\rho\pi(i+1)}$ if $i \in \Des(\rho\pi)$.	
	With this choice of $\pi$, we may construct a partition $\lambda = (\lambda_1,\dots,\lambda_n)$ of $r-\maj(\gamma,\pi)$ into $n$ parts (again allowing zeros), 
	with each part at most $k-\des(\gamma,\pi)$, by setting
	\[
		\lambda_i = \alpha_{\rho\pi(i)} - |\Des(\rho\pi) \setminus [i-1]|.
	\]
	Notice that for a fixed $\pi$, this map is a bijection between all compositions $\alpha$ satisfying \eqref{eq: inequalities} and partitions $\lambda$ of $r-\maj(\gamma,\pi)$ into $n$ parts, 
	each at most $k-\des(\gamma,\pi)$.

	Thus, the sum on the right-hand side of~\eqref{eq: m} can be computed by first computing  the generating function for the partitions corresponding to a fixed $\pi$, and then summing all of these generating functions.
	By our construction, the generating function for a fixed $\pi$ is
	\[
		\frac{q^{\maj(\gamma,\pi)}t^{\des(\gamma,\pi)}}{\prod_{j=0}^n (1-q^jt)}.
	\]
	Summing over all $(\gamma,\pi) \in G_{\eps}$ gives the claimed result.
\end{proof}

\begin{example}\label{ex: (1,1,0)}
Consider $\zeta = (1,1,0)$.
The set $G_{\zeta}$ is
\[
	G_{\zeta} = \{ [1^1\, 2^1\, 3^0], [1^1\, 3^0\, 2^1], [2^1\, 1^1\, 3^0], [2^1\, 3^0\, 1^1], [3^0\, 1^1\, 2^1], [3^0\, 2^1\, 1^1] \}.
\]
Notice that 
\[
\begin{aligned}
	\Des [1^1\, 2^1\, 3^0] &= \{0,1\}, \\
	\Des [1^1\, 3^0\, 2^1] &= \{0,2\}, \\
	\Des [2^1\, 1^1\, 3^0] &= \{0\}, \\
	\Des [2^1\, 3^0\, 1^1] &= \{0,2\}, \\
	\Des [3^0\, 1^1\, 2^1] &= \{1,2\}, \\
	\Des [3^0\, 2^1\, 1^1] &= \{1\}.
\end{aligned}
\]
By setting $\rho = 213$, then
\[
\begin{array}{rcl}
	\Des [\rho(1)^1\, \rho(2)^1\, \rho(3)^0] =& \Des(213) \uplus \{0\} &= \{0,1\}, \\
	\Des [\rho(1)^1\, \rho(3)^0\, \rho(2)^1] =& \Des(231) \uplus \{0\} &= \{0,2\}, \\
	\Des [\rho(2)^1\, \rho(1)^1\, \rho(3)^0] =& \Des(123) \uplus \{0\} &= \{0\}, \\
	\Des [\rho(2)^1\, \rho(3)^0\, \rho(1)^1] =& \Des(132) \uplus \{0\} &= \{0,2\}, \\
	\Des [\rho(3)^0\, \rho(1)^1\, \rho(2)^1] =& \Des(321) &= \{1,2\}, \\
	\Des [\rho(3)^0\, \rho(2)^1\, \rho(1)^1] =& \Des(312) &= \{1\}.
\end{array}
\]
\end{example}

We also need to consider the generating functions obtained by permuting the entries of $\eps$.

\begin{lemma}\label{lem: triple-preserving}
	Let $\eps, \eps' \in \ZZ_r^n$ such that $\eps'$ is a permutation of the entries of $\eps$.
	There is a $(\maj,\des,\col)$-preserving bijection between $G_{\eps}$ and $G_{\eps'}$.
	Consequently,
	\[
		\sum_{(\gamma,\pi) \in G_{\eps}} q^{\maj(\gamma,\pi)}t^{\des(\gamma,\pi)}u^{\col(\eps)}
= \sum_{(\gamma,\pi) \in G_{\eps'}} q^{\maj(\gamma,\pi)}t^{\des(\gamma,\pi)}u^{\col(\eps')}.
	\]
\end{lemma}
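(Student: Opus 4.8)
The plan is to build the bijection by hand and then dispose of the three statistics one at a time. The color weight costs nothing: every element of $G_{\eps}$ has $\col$ equal to $\col(\eps)$ and every element of $G_{\eps'}$ has $\col$ equal to $\col(\eps')$, and since $\eps'$ is a rearrangement of the entries of $\eps$ these two numbers agree. So the entire content is to produce a bijection $G_{\eps}\to G_{\eps'}$ under which $\Des$, and hence both $\maj$ and $\des$, is preserved. The structural observation that makes this possible, already implicit in the case analysis of Lemma~\ref{lem: same support}, is that membership $i\in\Des(\gamma,\pi)$ depends on the colors only through their \emph{support}: for $i\ge 1$ the descent is governed by the color-statuses (zero versus positive) of the two letters $\pi(i),\pi(i+1)$ together with the relative order of $\pi(i)$ and $\pi(i+1)$ in the cases where those statuses coincide, while for $i=0$ it depends solely on whether $\pi(1)$ is colored. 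In particular the exact positive color values never enter $\Des$, which is exactly what lets the bijection come from a single relabeling of values.

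Concretely, I would let $\phi\colon[n]\to[n]$ be the unique bijection carrying $\supp(\eps)$ to $\supp(\eps')$ in an order-preserving way and carrying $[n]\setminus\supp(\eps)$ to $[n]\setminus\supp(\eps')$ in an order-preserving way; this exists and is unique because $\eps'$ being a permutation of $\eps$ forces $|\supp(\eps)|=|\supp(\eps')|$. The proposed map is $\Phi(\gamma,\pi)=(\gamma',\phi\circ\pi)$, where $\gamma'_i=\eps'_{\phi(\pi(i))}$; since $\pi\mapsto\phi\circ\pi$ is a bijection of $\symm_n$, the map $\Phi$ is a bijection $G_{\eps}\to G_{\eps'}$. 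To finish I would verify $\Des(\gamma',\phi\circ\pi)=\Des(\gamma,\pi)$ position by position against the three defining conditions of the BZ order: $\phi$ preserves color-status, which handles the third condition and the $i=0$ boundary case; $\phi$ preserves order inside the support, which handles the ``both colored'' second condition; and $\phi$ preserves order inside the complement of the support, which handles the ``both uncolored'' first condition. Mixed pairs (one colored, one not) are forced to be a descent or a non-descent purely by color-status and so impose no constraint on $\phi$. Summing $q^{\maj}t^{\des}$ over $\Phi$ and invoking $\col(\eps)=\col(\eps')$ then yields the displayed generating-function identity.

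The one thing to get right, and the natural trap, is the temptation to make $\Phi$ match colors \emph{exactly}, that is, to demand $\eps'_{\phi(v)}=\eps_v$. When $\eps$ carries a repeated or large color this can force $\phi$ to reverse the order of two equal-colored values, which destroys descents of the first or second type; the toy case $\eps=(2,1)$, $\eps'=(1,2)$ already shows exact-color matching failing where the order-preserving $\phi$ (here the identity, since the support is unchanged) succeeds. The point I would emphasize is precisely that exact colors are never needed: $\des$ and $\maj$ see only the support, and $\col$ is constant on each $G$. As a final sanity check the construction reproduces the equality of the descent multisets observed in Examples~\ref{ex: (1,0,1)} and~\ref{ex: (1,1,0)}, where $\supp(\eps)=\{1,3\}$, $\supp(\eps')=\{1,2\}$, and the resulting relabeling is $\phi=(2\,3)$.
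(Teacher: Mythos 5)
Your proof is correct and is essentially the paper's own argument: your value-relabeling map $\phi$ is precisely the underlying map of the paper's unique order-preserving bijection $\omega$ between the two BZ-totally-ordered letter sets $\{1^{\eps_1},\dots,n^{\eps_n}\}$ and $\{1^{\eps'_1},\dots,n^{\eps'_n}\}$, so your $\Phi$ coincides with the paper's $\Omega$. The only difference is expository: the paper deduces $\Des$-preservation tersely from $|\supp(\eps)|=|\supp(\eps')|$ together with order-preservation of $\omega$, while you spell out the same verification case by case (both colored, both uncolored, mixed, and the $i=0$ position).
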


\begin{proof}
	Since the sets
	\[
		\{ 1^{\eps_1}, 2^{\eps_2}, \dots, n^{\eps_n}\}
	\]
	and
	\[
		\{ 1^{\eps'_1}, 2^{\eps'_2}, \dots, n^{\eps'_n}\}.
	\]
	are totally ordered, there is a unique order-preserving bijection $\omega$ between them.
	So, consider the map $\Omega:G_{\eps} \to G_{\eps'}$ defined by
	\[
		\Omega([\pi(1)^{\eps_1}\, \dots \, \pi(n)^{\eps_n}]) = [\omega(\pi(1)^{\eps_1}) \, \dots \, \omega(\pi(n)^{\eps_n})].
	\]
	Since $\eps$ and $\eps'$ are permutations of each other, $\Omega$ is $\col$-preserving.
	Moreover, $|\supp(\eps)| = |\supp(\eps')|$, and this together with the fact that $\omega$ is order-preserving proves that $\Omega$ is $\Des$-preserving, and therefore is $\des$- and $\maj$-preserving as desired.
\end{proof}

\begin{example}
Recall $\eps = (1,0,1)$ from Example~\ref{ex: (1,0,1)} and $\zeta = (1,1,0)$ from Example~\ref{ex: (1,1,0)}.
Since $3^1 < 1^1 < 2^0$ and $2^1 < 1^1 < 3^0$, we have
\[
	\begin{aligned}
		\omega(3^1) &= 2^1 \\
		\omega(1^1) &= 1^1 \\
		\omega(2^0) &= 3^0. 
	\end{aligned}
\]
Comparing the descent sets from Example~\ref{ex: (1,0,1)} and Example~\ref{ex: (1,1,0)}, we see that
\[
\begin{aligned}
	\Des \Omega([1^1\, 2^0\, 3^1]) &= \Des[1^1\, 3^0\, 2^1], \\
	\Des \Omega([1^1\, 3^1\, 2^0]) &= \Des[1^1\, 2^1\, 3^0], \\
	\Des \Omega([2^0\, 1^1\, 3^1]) &= \Des[3^0\, 1^1\, 2^1], \\
	\Des \Omega([2^0\, 3^1\, 1^1]) &= \Des[3^0\, 2^1\, 1^1], \\
	\Des \Omega([3^1\, 1^1\, 2^0]) &= \Des[2^1\, 1^1\, 3^0], \\
	\Des \Omega([3^1\, 2^0\, 1^1]) &= \Des[2^1\, 3^0\, 1^1].
\end{aligned}
\]
\end{example}

We can now combine our previous results to provide the penultimate step in our proof of Theorem~\ref{thm: BZ}.

\begin{cor}\label{cor: inner sum}
	For every $\eps \in \ZZ_r^n$,
	\[
		\sum_{(v,k) \in \cone(F_{\eps}) \cap \ZZ^{n+1}} m(v,k) = \frac{\sum_{(\gamma,\pi) \in G_{\eps}} q^{\maj(\gamma,\pi)}t^{\des(\gamma,\pi)}u^{\col(\eps)}}{\prod_{j=0}^n(1-q^jt)}.
	\]
\end{cor}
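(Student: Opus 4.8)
The plan is to deduce the Corollary from Proposition~\ref{prop: cone for few colors} by reducing an arbitrary $\eps$ to the special vector treated there through two moves: first replacing the entries of $\eps$ by $0$'s and $1$'s without changing the support, and then permuting coordinates so that the $1$'s are pushed to the front. Write $l=|\supp(\eps)|$, let $\hat\eps\in\ZZ_r^n$ be the $(0,1)$-vector with $\supp(\hat\eps)=\supp(\eps)$, and let $\eps^{*}=(1,\dots,1,0,\dots,0)$ with exactly $l$ ones, so that $\eps^{*}$ is a rearrangement of $\hat\eps$ and $\col(\hat\eps)=\col(\eps^{*})=l$.

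The first move is handled entirely by Lemma~\ref{lem: same support}. On the cone side it gives
\[
	\sum_{(v,k)\in\cone(F_{\eps})\cap\ZZ^{n+1}} m(v,k) = u^{\col(\eps)-l}\sum_{(v,k)\in\cone(F_{\hat\eps})\cap\ZZ^{n+1}} m(v,k).
\]
On the combinatorial side, for each $\pi$ the elements $(\gamma,\pi)\in G_{\eps}$ and $(\hat\gamma,\pi)\in G_{\hat\eps}$ satisfy $\supp(\gamma)=\supp(\hat\gamma)=\pi^{-1}(\supp(\eps))$, so Lemma~\ref{lem: same support} forces $\Des(\gamma,\pi)=\Des(\hat\gamma,\pi)$, whence $\maj$ and $\des$ agree term by term and $\sum_{G_{\eps}}q^{\maj}t^{\des}=\sum_{G_{\hat\eps}}q^{\maj}t^{\des}$. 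The second move is handled on the combinatorial side by Lemma~\ref{lem: triple-preserving}, which supplies a $(\maj,\des,\col)$-preserving bijection $G_{\hat\eps}\to G_{\eps^{*}}$ and hence equality of the corresponding generating functions, while Proposition~\ref{prop: cone for few colors} evaluates the cone sum over $F_{\eps^{*}}$.

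The main obstacle is that this sorting move is a coordinate permutation, and \emph{no stated lemma transports the cone sum across such a permutation} (Lemma~\ref{lem: same support} requires equal support, and Lemma~\ref{lem: triple-preserving} speaks only about $G_{\eps}$). I would supply the missing ingredient by observing that $m'(je_i,k)$ is independent of $i$, so that for fixed $k$ the weight $m(v,k)$ is a symmetric function of $v_1,\dots,v_n$; since permuting coordinates by $\sigma\in\symm_n$ carries $F_{\hat\eps}$ to $F_{\sigma\hat\eps}$ and lattice points to lattice points, it follows that the cone sums over $F_{\hat\eps}$ and $F_{\eps^{*}}$ are equal. Chaining the three equalities — the support move, this permutation invariance, and Proposition~\ref{prop: cone for few colors} — and bookkeeping the factor $u^{\col(\eps)-l}$ against the $u^{l}$ produced by the proposition to recover $u^{\col(\eps)}$ in the numerator then yields exactly the claimed formula. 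The only genuine care required is tracking these $u$-exponents and confirming that the two independent passages $\hat\eps\to\eps^{*}$, one geometric and one combinatorial, are reconciled precisely because the proposition ties the two sides together at $\eps^{*}$.
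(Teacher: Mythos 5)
Your proof is correct and matches the paper's argument in all essentials: the paper likewise reduces to Proposition~\ref{prop: cone for few colors} by chaining Lemma~\ref{lem: same support}, a coordinate-permutation step, and Lemma~\ref{lem: triple-preserving}, merely performing the two reductions in the opposite order (it first sorts $\eps$ into weakly decreasing order $\eps'$, then flattens to $\oeps=(1,\dots,1,0,\dots,0)$, whereas you first flatten to a $(0,1)$-vector and then sort). The permutation step you flag as not covered by any stated lemma occurs in the paper as well, where it is justified exactly as you justify it --- by the symmetry of $m$ in $v_1,\dots,v_n$ coming from the fact that $m'(je_i,k)$ is independent of $i$ --- under the brief phrase ``using the definition of $m$.''
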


\begin{proof}
	Let $\eps' \in \ZZ_r^n$ be $\eps$ rewritten in weakly decreasing order, 
and let $\oeps = (1,\dots,1,0,\dots,0)$ where there are $|\supp(\eps)|$ copies of $1$.
	Using the definition of $m$ and then Lemma~\ref{lem: same support},
	\[
	\begin{aligned}
		\sum_{(v,k) \in \cone(F_{\eps}) \cap \ZZ^{n+1}} m(v,k) &= \sum_{(v,k) \in \cone(F_{\eps'}) \cap \ZZ^{n+1}} m(v,k) \\
				&= \left(\sum_{(v,k) \in \cone(F_{\oeps}) \cap \ZZ^{n+1}} m(v,k)\right)u^{\col(\eps') - \col(\oeps)}.
	\end{aligned}
	\]
	By Proposition~\ref{prop: cone for few colors},
	\[
		\sum_{(v,k) \in \cone(F_{\oeps}) \cap \ZZ^{n+1}} m(v,k) = 
			\frac{\sum_{(\gamma,\pi) \in G_{\oeps}} q^{\maj(\gamma,\pi)}t^{\des(\gamma,\pi)}u^{\col(\oeps)}}{\prod_{j=0}^n (1-q^jt)}.
	\]
	Plugging this last equality into the previous one and then using Lemmas~\ref{lem: same support} and \ref{lem: triple-preserving} in turn, we see
	\[
	\begin{aligned}
		\sum_{(v,k) \in \cone(F_{\eps}) \cap \ZZ^{n+1}} m(v,k)
		&=\left(\frac{\sum_{(\gamma,\pi) \in G_{\oeps}} q^{\maj(\gamma,\pi)}t^{\des(\gamma,\pi)}u^{\col(\oeps)}}{\prod_{j=0}^n (1-q^jt)}\right)u^{\col(\eps') - \col(\oeps)}\\
		&= \frac{\sum_{(\gamma,\pi) \in G_{\eps'}} q^{\maj(\gamma,\pi)}t^{\des(\gamma,\pi)}u^{\col(\eps')}}{\prod_{j=0}^n (1-q^jt)} \\
		&= \frac{\sum_{(\gamma,\pi) \in G_{\eps}} q^{\maj(\gamma,\pi)}t^{\des(\gamma,\pi)}u^{\col(\eps)}}{\prod_{j=0}^n (1-q^jt)},
	\end{aligned}
	\]
as desired.
\end{proof}

This bring us to the proof of the main theorem.

\begin{proof}[Geometric proof of Theorem~\ref{thm: BZ}]
	By Equations~\eqref{eq: sum over cone} and \eqref{eq: disjoint union}, the left hand side of \eqref{eq: BZ} is 
	\[
	\begin{aligned}
		\sum_{k \geq 0} ([k+1]_q + u[r-1]_u[k]_q)^nt^k  & = \sum_{(v,k) \in C_{r,n} \cap \ZZ^{n+1}} m(v,k) \\ 
					&= \sum_{\eps \in \ZZ_r^n} \sum_{(v,k) \in \cone(F_{\eps}) \cap \ZZ^{n+1}} m(v,k).
	\end{aligned}
	\]
	By Corollary~\ref{cor: inner sum}, we can express this as 
	\[
		\sum_{\eps \in \ZZ_r^n} \sum_{(v,k) \in \cone(F_{\eps}) \cap \ZZ^{n+1}} m(v,k) = 
			\sum_{\eps \in \ZZ_r^n} \frac{\sum_{(\gamma,\pi) \in G_{\eps}} q^{\maj(\gamma,\pi)}t^{\des(\gamma,\pi)}u^{\col(\eps)}}{\prod_{j = 0}^n(1-q^jt)}.
	\]
	Instead of expressing this as two separate sums, we can simply range over all elements of $\ZZ_r \wr \symm_n$.
	Recalling that $\col(\eps,\pi) = \col(\eps)$ for a given $\pi$, the right side of the above equation is 
	\[
		\frac{\sum_{(\eps,\pi) \in \ZZ_r \wr \symm_n} q^{\maj(\eps,\pi)}t^{\des(\eps,\pi)}u^{\col(\eps,\pi)}}{\prod_{j=0}^n(1-q^jt)},
	\]
	completing the proof.	
\end{proof}

\bibliographystyle{plain}
\bibliography{references}

\begin{thebibliography}{1}

\bibitem{BeckBraun13}
Matthias Beck and Benjamin Braun.
\newblock Euler-{M}ahonian statistics via polyhedral geometry.
\newblock {\em Adv. Math.}, 244:925--954, 2013.

\bibitem{BiagioliZengWreath}
Riccardo Biagioli and Jiang Zeng.
\newblock Enumerating wreath products via {G}arsia-{G}essel bijections.
\newblock {\em European J. Combin.}, 32(4):538--553, 2011.

\end{thebibliography}

\end{document}